\numberwithin{equation}{section}
\def\R{\mathbb{R}}
\def\P{\mathbb{P}}
\def\C{\mathbb{C}}
\def\wh{\widehat}
\def\N{\mathbb{N}}
\def\H{\mathfrak{H}}
 \def\E{\mathbb{E}}
\def\to{\rightarrow}
\def\H{\mathfrak{H}}
\newtheorem{thm}{Theorem}[section]
\newtheorem{lemma}[thm]{Lemma}
\newtheorem{cor}[thm]{Corollary}
\newtheorem{obs}[thm]{Remark}
\def\1{\mathbbm{I} }
\begin{document}

\title{Almost sure convergence on chaoses}

 \date{\today}

\author{Guillaume Poly \and Guangqu Zheng}

\address{Guillaume Poly:  Institut de recherche math\'ematique de Rennes (UMR CNRS 6625), Universit\'e de Rennes 1, 
B\^atiment 22-23, 263, avenue du G\'en\'eral Leclerc, 35042 Rennes CEDEX, France}
\email{guillaume.poly@univ-rennes1.fr}

\address{Guangqu Zheng: University of Kansas, Mathematics department, Snow Hall, 1460 Jayhawk blvd, Lawrence, KS 66045-7594, United States}
\email{gzheng90@ku.edu}

\subjclass[2010]{Primary: 60F99; Secondary: 60H05.}

\keywords{exchangeable pairs, Ornstein-Uhlenbeck semigroup, Wiener chaos, homogeneous sums.}

\maketitle

\begin{abstract}

We present several new phenomena about almost sure convergence  on homogeneous chaoses that include Gaussian Wiener chaos and homogeneous sums in independent random variables. Concretely, we establish the fact that almost sure convergence on a fixed finite sum of chaoses forces the almost sure convergence of each chaotic component. Our strategy uses  ``{\it extra randomness}'' and a simple conditioning argument. These ideas are close to the spirit of \emph{Stein's method of exchangeable pairs}. Some natural questions are left open in this note. 

\end{abstract}

\section{Introduction}

Before presenting our results, let us discuss briefly the following simple problem. Assume that, on a suitable probability space, we have two sequences of random variables $\{X_n\}_{n\ge 1}$ and $\{Y_n\}_{n\ge 1}$ which fulfill the following conditions:

\begin{itemize}
\item[a)] for every $n\geq 1$ we have $\E(X_n)=\E(Y_n)=0$,
\item[b)] for some $M>0$ and every $n\geq 1$ we assume that $\max(|X_n|,|Y_n|)\leq M$ almost surely,
\item[c)] for every $n\geq 1$, $\E(X_n Y_n)=0$,
\item[d)] $X_n+Y_n\xrightarrow[n\to\infty]{\text{a.s.}}~0.$ (a.s. is short for ``almost surely''.)
\end{itemize}

\begin{center}
{\it Do we necessarily have that $X_n\xrightarrow[n\to\infty]{\text{a.s.}}~0$ and $Y_n\xrightarrow[n\to\infty]{\text{a.s.}}~0$?}
\end{center}

Using dominated convergence and the orthogonality assumption c), we get $\E(X_n^2)+\E(Y_n^2)\to 0$ which is not far from   a positive answer to the previous question, up to extracting a subsequence. However, without additional structure assumptions, the answer is {\it totally negative}, as indicated by the following counterexample.

The starting point is to consider an uniformly bounded sequence of $1$-periodic functions that are centered and converge to zero in $L^2([0,1])$ but not almost surely for the Lebesgue measure $\lambda$ on $[0,1]$. A standard way to build this object is to consider for every $p\ge 0$ and every integer $n \in [2^p,2^{p+1}-1]$, the indicator function $f_n$ of the set $\left[ (n-2^p)2^{-p}, (n-2^p+1)2^{-p}\right]$. The convergence to zero of the $L^2([0,1])$-norm is straightforward while one may observe that for every non dyadic $x \in [0,1]$ and every $A>0$ one can find $n\ge A$ such that $f_n(x)=1$, which disproves the almost sure convergence. We shall denote by $\{f_n\}_{n\ge 1}$ the resulting sequence of functions and by $\tilde{f}_n$ the centered functions $f_n-\int_0^1 f_n(x) dx$. Clearly the previous observations are also valid for $\tilde{f}_n$, since $\int_0^1 f_n(x) dx \to 0.$

Next, given 
$
(e_p(x),f_p(x))_{p\ge 0}:=\big(\sqrt{2}\cos(2 p \pi x),\sqrt{2}\sin(2 p \pi x) \big)_{p\ge 0}
$ an orthonormal basis of $L^2([0,1])$,  we can expand $\tilde{f}_n$ in the $L^2([0,1])$ as follows:
$$
\tilde{f}_n=\sum_{p=1}^\infty \alpha_{p,n} e_p +\beta_{p,n} f_p \, .
$$

Besides, set $\epsilon_n=\sum_{p=2}^\infty \alpha_{p,n}^2+\sum_{p=1}^\infty \beta_{p,n}^2$ and $g_n=\tilde{f}_n-\alpha_{1,n} e_1+\sqrt{\epsilon_n} e_1$ and it is clear that $g_n$ and $\tilde{f}_n$ enjoy the same aforementionned properties. Finally we set $h_n=-g_n+2\sqrt{\epsilon_n} e_1$. We claim that $\{g_n,h_n\}$ fulfill the assumptions a), b), c), d) on the probability space $([0,1],\mathcal{B}([0,1]),\lambda)$. Since $\tilde{f}_n$ and $e_1$ are centered we get that $g_n$ and $h_n$ are centered. Since $f_n$ is uniformly bounded as well as the sequences $\{\alpha_{1,n}\}_{n\ge 1}$ and $\{\epsilon_n\}_{n\ge 1}$ the same conclusion is valid for $\{g_n,h_n\}_{n\ge 1}$. Thus a) and b) hold. Besides, $g_n+h_n=2\sqrt{\epsilon_n} e_1$ tends to zero almost surely and d) is true. Finally, the orthogonality assumption c) also holds as illustrated by the next computations:
\begin{eqnarray*}
\int_0^1 g_n(x) h_n(x) dx &=& -\int_0^1 g_n^2(x) dx +2 \epsilon_n\\
&=& -2 \epsilon_n+2 \epsilon_n\\
&=&0.
\end{eqnarray*}

As a conclusion, the orthogonality assumption  is not sufficiently powerful to \textit{decompose} the almost sure convergence on the orthogonal projections. This short note explores the previous problem in the specific framework of Gaussian Wiener chaoses and homogeneous sums in independent random variables, which are well known to  display much more structure besides orthogonality. In this setting, we shall see that the previous question has a \textit{positive} answer.

Let us begin with one of our main results: We assume that all the random objects in this note are defined on a common probability space $\big( \Omega, \mathscr{F}, \mathbb{P} \big)$.

\begin{thm}\label{thm-Gauss}
Fix an integer $q\geq 2$  and  let $\{F_n, n\geq 1\}$ be a sequence in the sum of the first $q$ Gaussian  Wiener chaoses $($associated to a given isonormal Gaussian process $W$$)$ such that as $n\to+\infty$,
$F_n$ converges almost surely to  some random variable, denoted by $F$. Then,   $F$   belongs to the  sum of the first $q$ Gaussian  Wiener chaoses. Moreover, as $n\to+\infty$,
\begin{align}\label{ASG}  
  \text{$J_p(F_n)$ converges almost surely to $J_p(F)$}  
\end{align}
    for each $p\in\{0,\cdots,q\}$. Here $J_p(\bullet)$ denotes the projection operator onto the $p$th Gaussian Wiener chaos, see Section \ref{sec-Gauss} for more details. 

\end{thm}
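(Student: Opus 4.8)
The plan is to transport the almost sure convergence to an enlarged probability space by means of an independent copy of $W$, and then to exploit a rigidity property of trigonometric polynomials together with a Mehler-type conditioning identity. Concretely, let $W'$ be an independent copy of the isonormal process $W$, defined on a product extension of $\big(\Omega,\mathscr F,\P\big)$, and for $\theta\in[0,2\pi)$ set $W_\theta:=\cos\theta\,W+\sin\theta\,W'$. Then $W_\theta\eqd W$ for every fixed $\theta$, so if we write $F_n=\sum_{p=0}^q J_p(F_n)$ and let $F_n^\theta:=\sum_{p=0}^q I_p^{W_\theta}\big(f_n^{(p)}\big)$ denote the same functional evaluated along $W_\theta$ (where $f_n^{(p)}$ is the kernel of $J_p(F_n)$), the identity in law guarantees that, for each fixed $\theta$, $F_n^\theta$ converges almost surely on the enlarged space to $F^\theta$, the corresponding functional of the limit $F$. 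Integrating the full-measure convergence events against the uniform law of $\theta$ and using Fubini, I would first record that for almost every $(\omega,\omega')$ the convergence $F_n^\theta\to F^\theta$ holds for almost every $\theta$.

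The second step is a rigidity observation. Using the addition formula $I_p^{W_\theta}(f)=\sum_{k=0}^p\binom{p}{k}\cos^k\theta\,\sin^{p-k}\theta\,I_{k,p-k}(f)$, where $I_{k,p-k}$ denotes the mixed multiple integral ($k$ arguments along $W$ and $p-k$ along $W'$), each map $\theta\mapsto F_n^\theta$ is, for fixed $(\omega,\omega')$, a trigonometric polynomial of degree at most $q$. Since such polynomials form a fixed finite-dimensional space, almost-everywhere convergence in $\theta$ of a sequence of them forces convergence of the full coefficient vector (evaluate at $2q+1$ generic nodes and invert the resulting Vandermonde-type system); hence the limit $\theta\mapsto F^\theta$ is almost surely again a trigonometric polynomial of degree at most $q$, the convergence is in fact uniform in $\theta$, and every Fourier coefficient of $F_n^\theta$ converges almost surely. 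In particular, evaluating at $\theta=0$ recovers $F=F^0$.

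The third step identifies the limit and produces $L^2$ convergence of the components through conditioning, in the spirit of Mehler's formula: since the mixed integrals $I_{k,p-k}$ with $p-k\ge 1$ are centered given $W$, one has $\E\big[F_n^\theta\mid W\big]=\sum_{p=0}^q\cos^p\theta\,J_p(F_n)=P_{t}F_n$ with $\cos\theta=e^{-t}$. Combining almost sure convergence with hypercontractivity (which renders all $L^r$-norms equivalent on the fixed sum of the first $q$ chaoses, hence upgrades convergence in probability to convergence in $L^2$ and shows $F\in\bigoplus_{p\le q}\mathcal H_p$), the right-hand side converges in $L^2$; reading off the coefficients of this polynomial in $\cos\theta$ at $q+1$ distinct values then yields $J_p(F_n)\to J_p(F)$ in $L^2$ for each $p$.

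The genuinely delicate point, and the step I expect to be the main obstacle, is upgrading this $L^2$ (equivalently, in probability) convergence of the individual projections to almost sure convergence, which is the actual content of \eqref{ASG}. Applying the reflection $W\mapsto -W$, which sends $J_p$ to $(-1)^p J_p$, splits the sum into its even and odd parts and shows that $\sum_{p\ \mathrm{even}}J_p(F_n)$ and $\sum_{p\ \mathrm{odd}}J_p(F_n)$ each converge almost surely; this settles the cases $q\le 2$ after subtracting the deterministic term $J_0$. For higher $q$ the difficulty is to separate chaoses of the same parity almost surely, and a no-go phenomenon analogous to the introductory counterexample shows that no fixed linear functional of the family $\{F_n^\theta\}_\theta$ can extract a single $J_p(F_n)$. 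My strategy here would be an induction on $q$ that peels off the top chaos: working with the differences $F_n-F$, whose components tend to $0$ in $L^2$, I would combine the uniform-in-$\theta$ almost sure convergence from the rigidity step with the $L^2$-identified limits to force the top component $J_q(F_n)\to J_q(F)$ almost surely, and then iterate on $F_n-J_q(F_n)$. Making this last implication precise — turning the conditioning identity, which is only $L^2$-continuous, into an almost sure statement for same-parity components — is where the real work lies.
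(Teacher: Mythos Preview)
Your setup is essentially the paper's: the rotation $W_\theta=\cos\theta\,W+\sin\theta\,W'$ is exactly the copy $W^t$ with $e^{-t}=\cos\theta$, and your conditioning identity $\E[F_n^\theta\mid W]=\sum_{p\le q}\cos^p\theta\,J_p(F_n)$ is Mehler's formula $P_tF_n=\sum_{p\le q}e^{-pt}J_p(F_n)$. The genuine gap is in how you use this identity. You apply hypercontractivity \emph{globally} to the sequence $(F_n)$ and conclude only that $P_tF_n\to P_tF$ in $L^2$, which, as you say, does not give almost sure convergence of the projections. The paper's key move --- for which you already have every ingredient --- is to apply hypercontractivity \emph{conditionally given $W$}. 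For a fixed realization of $W$, each $F_n^\theta$ lies in the sum of the first $q$ Wiener chaoses \emph{with respect to $W'$} (your addition formula says precisely this), and for almost every fixed $\omega$ one has $F_n^\theta\to F^\theta$ almost surely in $\omega'$. Hypercontractivity on the $W'$-chaoses then upgrades this conditional almost sure convergence to convergence in $L^1(\omega')$, so that the conditional expectation $\E[F_n^\theta\mid W](\omega)$ converges for almost every $\omega$. In other words, $P_tF_n\to P_tF$ \emph{almost surely}, not merely in $L^2$.

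Once this is in hand for $q+1$ distinct values of $t$ (equivalently, of $\theta$), inverting the Vandermonde system in the powers $(e^{-t})^p$ yields $J_p(F_n)\to J_p(F)$ almost surely for every $p\in\{0,\ldots,q\}$. There is no remaining ``delicate point'': the parity trick, the rigidity argument on trigonometric Fourier coefficients, and the induction peeling off the top chaos are all unnecessary detours. Incidentally, your rigidity step does give almost sure convergence of the Fourier coefficients of $\theta\mapsto F_n^\theta$, but those $2q+1$ coefficients are linear combinations of the mixed integrals $I_{k,p-k}$ depending on both $\omega$ and $\omega'$; they do not isolate the $q+1$ pure projections $J_p(F_n)=I_{p,0}$, so that route cannot close the argument by itself.
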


The concept of  chaos that we consider dates back to N. Wiener's 1938 paper \cite{Wiener38}, in  which Wiener first introduced the notion of \emph{multiple Wiener integral} calling it polynomial chaos. It was later modified in K. It\^o's work \cite{Ito51, Ito56} so that  the multiple Wiener integrals of different orders are orthogonal to each other. The  multiple Wiener integrals   have proved to be a very useful tool
in the investigation of   the asymptotic behavior of partial sums of dependent random
variables; see P. Major's book \cite{Major-book}. Recently, the  multiple Wiener integrals that we call Wiener chaos here have   gained  considerable attention, as one can see along the expanding research line passing from D. Nualart and G. Peccati's fourth moment theorem \cite{FMT}
to the birth of the so-called Malliavin-Stein approch \cite{NP09MS}; see  the book \cite{bluebook} and \cite[Chapter 1]{GZthesis}  for more details.

   The results in \cite{Major-book, bluebook, FMT} mostly concern the convergence in distribution, and related ideas have been used to investigate the total-variation convergence on Wiener chaoses, see  for example \cite{NourdinPoly13}. Our Theorem \ref{thm-Gauss}, to the best of our knowledge, is the very first instance that focuses on the almost sure convergence on Wiener chaoses.  Note that the almost sure convergence in \eqref{ASG} occurs along the whole sequence, {\it not just} along some subsequence. In fact,  the almost sure convergence   along some subsequence is quite straightforward:  As a consequence of \emph{hypercontractivity property} of the \emph{Ornstein-Uhlenbeck semigroup}, the almost sure convergence of $F_n$ to $F$ implies the $L^2(\P)$ convergence of $F_n$ to $F$, so that $F$ belongs to the sum of the first $q$ Gaussian Wiener chaoses;   at the same time, we have $J_p(F_n)$ converges in $L^2(\P)$ to $J_p(F)$ because of the orthogonality property, hence the desired almost sure convergence holds true along some subsequence; see Section \ref{sec-Gauss} for more explanation.
  
   Let us first sketch our strategy of showing \eqref{ASG}, which motivates us to investigate the almost sure convergence on homogeneous polynomials in independent random variables.  By assumption, $F_n = \mathfrak{f}_n(W)\to  F=\mathfrak{f}(W)$ almost surely, where $\mathfrak{f}_n, \mathfrak{f}$ are deterministic representations of $F_n$ and $F$ respectively. Then it also holds true that $\mathfrak{f}_n(W^t)\to \mathfrak{f}(W^t)$ almost surely for each $t>0$, where 
   \begin{align}\label{Wt}
   W^t : = e^{-t}W + \sqrt{1 - e^{-2t}} \wh{W} \quad\text{with $\wh{W}$ an independent copy of $W$,}
   \end{align}
   is an isonormal Gaussian process that is exchangeable with $W$.  That is, $\big( W^t, W \big)$ has the same law as $\big( W, W^t \big)$ and in particular, $W^t$ has the same law as $W$. Conditioning on $W$, $\mathfrak{f}_n(W^t)$ belongs to the sum of the first $q$ Gaussian Wiener chaoses associated to $\wh{W}$ and converges to  $\mathfrak{f}(W^t)$ almost surely, therefore we deduce from the hypercontractivity that  
   \begin{center}
   conditioning on $W$, $\mathfrak{f}_n(W^t)$ converges in $L^2\big(\Omega, \sigma\{ \wh{W} \}, \P \big)$ to $\mathfrak{f}(W^t)$. 
   \end{center} 
   It follows immediately by taking expectation with respect to $\wh{W}$ that almost surely,
   \[
   \E\Big[ \mathfrak{f}_n(W^t) \vert \sigma\{W\} \Big] \to  \E\Big[ \mathfrak{f}(W^t) \vert \sigma\{W\} \Big] \,.
   \]
By Mehler's formula \eqref{Mehler}, we rewrite the above convergence as follow: $P_t F_n \to P_t F$, where   $(P_t, t\geq 0)$ is the Ornstein-Uhlenbeck semigroup associated to $W$.  By the definition of this semigroup, one has 
\begin{align}\label{OUASG}
P_t F_n = \sum_{k=0}^q e^{-kt} J_k(F_n) \quad\text{converges almost surely to } \quad P_t F = \sum_{k=0}^q e^{-kt} J_k(F)
\end{align}
   for each $t > 0$.  With many enough $t > 0$, we can deduce   \eqref{ASG} from \eqref{OUASG}.
   
   Now let us summarize the main idea of the proof: We introduced extra randomness $\wh{W}$ to construct an identical copy $W^t$ of $W$, then we carried out the conditioning argument that ``separates'' Wiener chaoses of different orders (see \eqref{OUASG}).  
   
   The above construction \eqref{Wt} of exchangeable pairs has been used in \cite{NZ17} to provide an elementary proof of the quantitative fourth moment theorem in \cite{NP09MS}, whose ideas have been further extended in the Ph.D thesis \cite{GZthesis}.   We remark here that the use of exchangeable pairs in aforementioned references is for proving the distributional convergence  within the framework of Stein's method of probabilistic approximation \cite{ICM14, ChenPoly, Stein86}; while we use  exchangeable\footnote{In fact, we just need the identical pairs to proceed our arguments in this note.} pairs to investigate the almost sure convergence, which is of some independent interest.

As announced, following the above strategy (but using different constructions of exchangeable pairs), we are able to provide   two more   results concerning  homogeneous sums.  Let us introduce a few notation first.

\medskip

\noindent{\bf Notation.}  Throughout this note, $\N: =\{1, 2, \ldots \}$,  $\mathbb{X} = \{ X_i, i\in\N \}$ denotes a sequence of centered independent real random variables and  $ \ell^2_0(\N)^{\odot p}$, with $p\geq 1$, stands for the set of kernels $f\in  \ell^2(\N)^{\otimes p}$ that is symmetric in its arguments and vanishes on its diagonals, that is, $f(i_1, \ldots, i_p) =0$ whenever $i_j = i_k$ for some $j\neq k$.  Given $f\in  \ell^2_0(\N)^{\odot p}$, we define the homogeneous sums (or homogeneous polynomials) with order $p$, based on the kernel $f$, by setting
\begin{align}\label{QPF}
Q_p\big(f; \mathbb{X} \big) = \sum_{i_1, \ldots, i_p\in\N} f(i_1, \ldots, i_p) X_{i_1} \cdots X_{i_p} \,,
\end{align}
whenever the above sum is well defined, for example when $f$ has finite support, or when $X_i$ has unit variance for each $i\in\N$.

\begin{obs}\label{rem12} {\rm  If $\mathbb{X}$ is a sequence of symmetric Rademacher random variables\footnote{That is, $X_i$ takes values in $\{-1, +1\}$ with equal probability for each $i\in\N$.}, the random variable defined in \eqref{QPF} is an element in    the so-called Rademacher chaos of order $p$; if $\mathbb{X}$ is a sequence of centered standard Poisson random variables\footnote{That is, $X_i = P_i - 1$ with $\{ P_i, i\in\N\}$ a sequence of i.i.d. standard Poisson random variables.}, the random variable $Q_p\big(f; \mathbb{X} \big)$ in \eqref{QPF} is a particular example of the Poisson Wiener chaos of order $p$; see Section 1.3 in \cite{GZthesis}.

}
\end{obs}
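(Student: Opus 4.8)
The plan is to recall the definitions of the Rademacher and the Poisson chaos and to check, using crucially that $f$ vanishes on diagonals, that the homogeneous sum \eqref{QPF} falls into the $p$th chaos in each case. In both settings the common starting point is the same observation: since $f(i_1,\dots,i_p)=0$ as soon as two indices coincide, every monomial surviving in \eqref{QPF} involves $p$ pairwise distinct indices, and the symmetry of $f$ lets us write
\[
Q_p\big(f;\mathbb{X}\big)=p!\sum_{i_1<\cdots<i_p}f(i_1,\dots,i_p)\,X_{i_1}\cdots X_{i_p}.
\]
For the Rademacher case I would then invoke the Walsh--Fourier expansion: every square-integrable functional of a symmetric Rademacher sequence decomposes uniquely along the Walsh monomials $\prod_{i\in A}X_i$ indexed by finite sets $A\subset\N$, and the Rademacher chaos of order $p$ is by definition the closed linear span of those with $|A|=p$. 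Each monomial above is exactly such a Walsh function with $A=\{i_1,\dots,i_p\}$, so the displayed identity exhibits $Q_p(f;\mathbb{X})$ as an $L^2$-limit of order-$p$ Walsh functions, hence as an element of the Rademacher chaos of order $p$.

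For the Poisson case I would use that $X_i=P_i-1=C_1(P_i)$ is the first Charlier polynomial evaluated at the $i$th Poisson variable, and that over distinct indices the discrete multiple Wiener--It\^o integral $I_p(f)$ of a symmetric kernel $f$ is defined precisely by the right-hand side above. Thus $Q_p(f;\mathbb{X})=I_p(f)$, which is by construction a (discrete, atomic) element of the Poisson Wiener chaos of order $p$; the requisite definitions are recalled in \cite[Section 1.3]{GZthesis}.

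The point deserving the most care, and the reason the diagonal-vanishing hypothesis is indispensable, is the Poisson identification. In contrast to the Rademacher variables, for which $X_i^2=1$ is a harmless constant, the centered Poisson variables satisfy $\E[X_i^2]=\Var(P_i)=1$ while $X_i^2$ is \emph{not} a pure second-order chaos element: expanded in Charlier polynomials it carries nonzero projections onto the lower chaoses (a direct computation with $\E[P_i]=1$, $\E[P_i^2]=2$, $\E[P_i^3]=5$ shows $X_i^2-1$ is not orthogonal to $X_i$). Hence a monomial with any repeated index would, after such an expansion, leak into strictly lower-order chaoses and destroy the homogeneity. Requiring $f\in\ell^2_0(\N)^{\odot p}$ removes exactly these diagonal terms, so that no lower-order corrections appear and $Q_p(f;\mathbb{X})$ is genuinely of pure order $p$.
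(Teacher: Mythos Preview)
The paper does not actually prove this statement: it is a \emph{remark}, stated without argument and accompanied only by a pointer to \cite[Section~1.3]{GZthesis}. Your proposal therefore goes strictly beyond what the paper itself does, supplying a genuine justification where the paper is content with a reference.

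Your argument is correct. The Rademacher case is exactly the standard Walsh decomposition, and your use of the diagonal-vanishing hypothesis to ensure each surviving monomial is a Walsh function of the right cardinality is the right observation. For the Poisson case, your identification $X_i=P_i-1=C_1(P_i)$ and the resulting equality $Q_p(f;\mathbb{X})=I_p(f)$ are correct; one small clarification that would make the link to the general Poisson Wiener chaos (as defined over a Poisson random measure on a $\sigma$-finite space, cf.\ \cite{LP-book} and the paper's Open question~2) more explicit is to note that an i.i.d.\ Poisson$(1)$ sequence is precisely a Poisson random measure on $\N$ with the counting measure as intensity, so that the compensated measure at $\{i\}$ is $X_i$ and the discrete multiple integral you write down is literally a special case of the general $I_p$. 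Your side computation showing $\E[(X_i^2-1)X_i]=\E[X_i^3]=1\neq 0$ is correct and nicely explains why the diagonal-vanishing condition is essential in the Poisson setting.
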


We are now in a position to state our results.

\begin{thm} \label{HP-bdd} Fix an integer $q\geq 2$ and let  $\mathbb{X}$ be a sequence  of centered independent real random variables such that $\E[ X_i^2] =1$ for each $i\in\N$ and $ \sup\{\E[ \vert X_j\vert^{2+\delta}] : j\in\N   \} < \infty$ for some $\delta>0$.   Assume $\big(F_n, n\geq 1\big)$ is a sequence of random variables satisfying the following conditions:
\begin{enumerate}
 \item[\rm (i)] For each $n\geq 1$, $F_n$ is of the form
 \begin{align}\label{Fnp}
 F_n = \E[F_n] + \sum_{p=1}^q Q_p\big(f_{p,n}; \mathbb{X} \big) \quad\text{with $f_{p,n}\in\ell^2_0(\N)^{\odot p}$ for each $p\in\{1,\ldots, q\}$.}
 \end{align}

\item[\rm (ii)] $F_n$ converges almost surely to  the random variable  $F$ of the following form:
 \begin{align}\label{Fpinf}
F =  \E[ F] + \sum_{p=1}^q Q_p(f_{p}; \mathbb{X}) \quad\text{with $f_{p}\in\ell^2_0(\N)^{\odot p}$ for each $p\in\{1,\ldots, q\}$.}
\end{align}

\end{enumerate}
Then, as $n\to+\infty$,  $\E[F_n]$ converges to $\E[F]$; and for each $p\in\{1, \ldots, q\}$,
 $Q_p\big(f_{p,n}; \mathbb{X} \big) $ converges almost surely to  $Q_p\big(f_{p}; \mathbb{X} \big) $.  

\end{thm}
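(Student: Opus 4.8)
The plan is to mimic the strategy used for Theorem~\ref{thm-Gauss}, namely to introduce extra randomness, build an exchangeable (or merely identically distributed) pair, and run a conditioning argument that separates the homogeneous components of different orders. Concretely, I would introduce an independent copy $\wh{\mathbb{X}} = \{\wh{X}_i, i\in\N\}$ of $\mathbb{X}$ and, for a parameter $t>0$, define an interpolated sequence $\mathbb{X}^t = \{X_i^t, i\in\N\}$ in which each coordinate is independently replaced by its copy with some probability depending on $t$ (the natural discrete analogue of \eqref{Wt}). For the Rademacher/Poisson or general independent case the cleanest choice is to resample each coordinate independently with probability $1-e^{-t}$, so that $\mathbb{X}^t$ is again a sequence of centered independent variables with the same marginals, and the pair $(\mathbb{X}, \mathbb{X}^t)$ is exchangeable. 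The key algebraic fact is that this resampling acts diagonally on homogeneous sums: conditionally on $\mathbb{X}$, taking expectation over the resampled coordinates multiplies $Q_p(f_{p,n}; \cdot)$ by a factor $e^{-pt}$, which is precisely a discrete Mehler/Ornstein--Uhlenbeck formula. This is the mechanism that will let me read off the individual chaotic components.

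First I would record the elementary facts: since $F_n = \mathfrak{f}_n(\mathbb{X}) \to F = \mathfrak{f}(\mathbb{X})$ almost surely for deterministic polynomial representations $\mathfrak{f}_n,\mathfrak{f}$, the same convergence $\mathfrak{f}_n(\mathbb{X}^t)\to\mathfrak{f}(\mathbb{X}^t)$ holds almost surely for each fixed $t>0$, because $\mathbb{X}^t \eqd \mathbb{X}$. Next I would invoke a hypercontractivity-type bound for homogeneous sums in independent variables with uniformly bounded $(2+\delta)$-moments; this is the moment hypothesis in the statement, and such estimates are standard in this setting (they play the role that hypercontractivity of the Ornstein--Uhlenbeck semigroup played in Theorem~\ref{thm-Gauss}). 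Conditioning on $\mathbb{X}$, the variables $\mathfrak{f}_n(\mathbb{X}^t)$ are bounded-order homogeneous sums in the resampled randomness, so the almost sure convergence upgrades to $L^2$ convergence conditionally, and taking the conditional expectation $\E[\,\cdot \mid \sigma(\mathbb{X})]$ gives, almost surely,
\begin{align}\label{plan-PT}
\E\big[\mathfrak{f}_n(\mathbb{X}^t)\mid \sigma(\mathbb{X})\big] \;\to\; \E\big[\mathfrak{f}(\mathbb{X}^t)\mid \sigma(\mathbb{X})\big].
\end{align}
By the discrete Mehler formula the left side equals $\E[F_n] + \sum_{p=1}^q e^{-pt} Q_p(f_{p,n};\mathbb{X})$ and the right side the analogous expression for $F$.

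Finally I would let $t$ range over $q+1$ distinct positive values $t_0,\ldots,t_q$. Then \eqref{plan-PT} becomes a system of $q+1$ linear relations in which the coefficient matrix $\big(e^{-p t_j}\big)_{0\le p,j\le q}$ is a Vandermonde matrix in the variables $e^{-t_j}$, hence invertible. Inverting this fixed invertible matrix expresses each individual component $Q_p(f_{p,n};\mathbb{X})$ and the constant $\E[F_n]$ as a fixed finite linear combination of the quantities $\E[\mathfrak{f}_n(\mathbb{X}^{t_j})\mid\sigma(\mathbb{X})]$, all of which converge almost surely; therefore each $Q_p(f_{p,n};\mathbb{X})\to Q_p(f_p;\mathbb{X})$ and $\E[F_n]\to\E[F]$ almost surely, as claimed. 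I expect the main obstacle to be the justification of the conditional $L^2$ step: one must verify the precise hypercontractivity/moment inequality for homogeneous sums of bounded order under only the $(2+\delta)$ uniform moment assumption (rather than boundedness), and confirm that it upgrades the conditional almost sure convergence to conditional $L^2$ convergence uniformly enough to pass expectations; the Mehler-type identity and the Vandermonde inversion are then routine.
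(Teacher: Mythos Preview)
Your proposal is correct and follows essentially the same route as the paper: the paper realizes your ``resample each coordinate with probability $1-e^{-t}$'' via i.i.d.\ standard exponentials $\theta_i$ and sets $X_i^t = X_i \1_{\{\theta_i\ge t\}}+\wh X_i \1_{\{\theta_i<t\}}$, then observes that, conditionally on $\mathbb{X}$, $\mathfrak{f}_n(\mathbb{X}^t)$ is a bounded-degree polynomial in the centered unit-variance sequence $\Xi=\big(\wh X_1,\tfrac{\1_{\{\theta_1<t\}}-(1-e^{-t})}{\sqrt{e^{-t}(1-e^{-t})}},\wh X_2,\ldots\big)$, to which the hypercontractivity bound for homogeneous sums (the paper's Lemma~\ref{hyper}, based on \cite{MOO10}) applies and yields the conditional $L^{2+\delta}$ uniform integrability you anticipated; the discrete Mehler identity and the Vandermonde ending are exactly as you describe.
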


Note that Theorem \ref{HP-bdd} covers the examples from Remark \ref{rem12}: In particular when  $\mathbb{X}$ is a sequence given therein, we do not need to assume that $F$ belongs to the sum of the first $q$  chaos, because this comes as a consequence of the hypercontractivity property (Lemma \ref{hyper}).

\medskip

  In the following result, we remove the $(2+\delta)$-moment assumption  with the price of  imposing finite-support assumption. It is worth pointing out that we only require $X_k$'s have first moments.

\begin{thm}[\bf Unbounded case]\label{HP-ubdd}   Fix an integer $q \geq 2$ and consider a sequence of centered independent {\rm (non-deterministic)} random variables $\mathbb{X} = \{X_k, k\in\N \}$. Let $F_n$ be a sequence  of random variables given by 
\[
F_n = \E[F_n] +  \sum_{p=1}^q Q_p\big(f_{p,n}; \mathbb{X}\big) \quad\text{with $f_{p,n}\in\ell^2_0(\N)^{\odot p}$ for each $p\in\{1,\ldots, q\}$.}
\]
 We assume that $d > q$ is an integer such that the support of $f_{p,n}$ is contained in $\{1, \ldots, d\}^p$ for any $p=1,\ldots, q$, $n\geq 1$. If  $F_n$ converges almost surely to $F$  with $F$ given by
 \[
 F =   \E[F] +  \sum_{p=1}^q Q_p\big(f_{p}; \mathbb{X}\big) \quad\text{with $f_{p}\in\ell^2_0(\N)^{\odot p}$ for each $p\in\{1,\ldots, q\}$.}
\]
  Then, as $n\to\infty$, $\E[F_n]\to \E[F]$; and $\forall p\in\{1,\ldots, q\}$,  $Q_p\big(f_{p,n}; \mathbb{X}\big)$ converges almost surely to $Q_p\big(f_{p}; \mathbb{X}\big)$.

\end{thm}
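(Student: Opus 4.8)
The plan is to exploit the finite-support hypothesis to reduce everything to a finite-dimensional problem, and then to separate the chaotic components by an \emph{almost sure linear independence} of monomials, which plays here the role that $L^2$-orthogonality/hypercontractivity played in the bounded case (and which is available with only first moments).

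First I would reduce to multilinear polynomials. Since each kernel $f_{p,n}$ vanishes on diagonals and is supported in $\{1,\ldots,d\}^p$, every monomial occurring in $Q_p(f_{p,n};\mathbb{X})$ is of the form $X_S:=\prod_{i\in S}X_i$ for some $S\subseteq\{1,\ldots,d\}$ with $|S|=p$; in particular every variable appears to the first power. Writing $X_\emptyset:=1$, one therefore has
\begin{equation}
F_n=\sum_{\substack{S\subseteq\{1,\ldots,d\}\\ |S|\le q}} c_{S,n}\,X_S,\qquad \E[F_n]=c_{\emptyset,n},\qquad Q_p(f_{p,n};\mathbb{X})=\sum_{|S|=p} c_{S,n}\,X_S,
\end{equation}
with $c_{S,n}=p!\,f_{p,n}(S)$ for $|S|=p$ (and analogously $F=\sum_S c_S X_S$). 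Since there are only finitely many sets $S$ and each $X_S$ is a fixed finite random variable, it suffices to prove that $c_{S,n}\to c_S$ for every $S$: this gives at once $\E[F_n]\to\E[F]$ and the almost sure convergence $Q_p(f_{p,n};\mathbb{X})\to Q_p(f_p;\mathbb{X})$.

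The heart of the matter is the claim that, for independent non-deterministic $X_1,\ldots,X_d$, the family $\{X_S : S\subseteq\{1,\ldots,d\}\}$ is \emph{almost surely linearly independent}, i.e. $\sum_S \beta_S X_S=0$ almost surely forces $\beta_S=0$ for all $S$. I would prove this by induction on $d$. Splitting according to whether $d\in S$,
\begin{equation}
\sum_S \beta_S X_S = A + X_d\,B,\qquad A:=\sum_{S\not\ni d}\beta_S X_S,\quad B:=\sum_{S\not\ni d}\beta_{S\cup\{d\}}X_S,
\end{equation}
where $A$ and $B$ are functions of $X_1,\ldots,X_{d-1}$ only, hence independent of $X_d$. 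Conditioning on $(X_1,\ldots,X_{d-1})$ and using that $X_d$ is non-deterministic (so it takes at least two values, each with positive probability) forces $B=0$ and $A=0$ almost surely; the induction hypothesis then kills all the $\beta_S$. This step is exactly where non-degeneracy and independence enter, and crucially it uses \emph{no} moment assumption. I would also remark that peeling off the last coordinate is the combinatorial shadow of resampling $X_d$ into an identical pair $\mathbb{X}^{(d)}=(X_1,\ldots,X_{d-1},X_d')$ — the relevant ``extra randomness'' in the spirit of the paper — since $F_n(\mathbb{X})-F_n(\mathbb{X}^{(d)})=(X_d-X_d')\sum_{S\ni d}c_{S,n}X_{S\setminus\{d\}}$.

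Finally I would pass from the almost sure convergence of $F_n$ to the convergence of coefficients by a normalization/compactness argument. Set $G_n:=F_n-F=\sum_S b_{S,n}X_S$ with $b_{S,n}:=c_{S,n}-c_S$, so $G_n\to0$ almost surely. If some $b_{S,n}$ did not tend to $0$, then along a subsequence $M_n:=\max_S|b_{S,n}|\ge\varepsilon>0$, and the normalized vector $(b_{S,n}/M_n)_S$, lying on the unit sphere, would converge along a further subsequence to some $(\beta_S)_S$ with $\max_S|\beta_S|=1$. Since $G_n/M_n\to0$ almost surely (because $|G_n/M_n|\le|G_n|/\varepsilon$) while $G_n/M_n\to\sum_S\beta_S X_S$ almost surely, we would get $\sum_S\beta_S X_S=0$ almost surely with $(\beta_S)\ne0$, contradicting the independence claim. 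Hence $c_{S,n}\to c_S$ for every $S$, which concludes the proof. The main obstacle is precisely the separation of components in the absence of $L^2$-orthogonality, where only first moments are available; it is overcome by the almost sure linear independence lemma above, whose proof trades orthogonality for the mere non-degeneracy of the coordinates.
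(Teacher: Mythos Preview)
Your proof is correct and takes a genuinely different route from the paper's. The paper keeps to its exchangeable-pairs theme: it introduces an independent copy $\wh{\mathbb{X}}$ and a uniform index $I$ on $[m]$, sets $X_k^{(I)}=X_k\1_{\{I\neq k\}}+\wh{X}_k\1_{\{I=k\}}$, and after conditioning on $\mathbb{X}$ obtains the discrete Mehler-type identity
\[
\E\big[F_n^{(I)}\mid\mathbb{X}\big]=\E[F_n]+\sum_{p=1}^q\Big(1-\frac{p}{m}\Big)Q_p(f_{p,n};\mathbb{X}),
\]
then varies $m\geq d$ and inverts the resulting Vandermonde-type system to separate degrees. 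The paper's intermediate step --- from $\alpha_n(i)+\beta_n(i)\wh{X}_i\to\alpha(i)+\beta(i)\wh{X}_i$ a.s.\ to $\alpha_n(i)\to\alpha(i)$ and $\beta_n(i)\to\beta(i)$ --- is exactly the affine (one-variable) case of your linear-independence lemma, so both proofs rest on the same non-degeneracy principle; the difference is organizational. Your approach trades the resampling machinery and the Vandermonde inversion for a direct finite-dimensional reduction: the multilinear independence lemma (by induction on $d$) plus a sphere-compactness argument on the coefficient vector. This is more elementary and makes transparent that the finite-support hypothesis turns the problem into pure linear algebra; the paper's approach, on the other hand, preserves the unity of method with Theorems~\ref{thm-Gauss} and~\ref{HP-bdd} and shows that the ``extra randomness'' idea still functions when only first moments are available.
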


This paper  naturally leads to the following open questions.

\noindent{\bf Open question 1.}  Let $\mathbb{Y}$ be a sequence of independent (nonsymmetric) Rademacher variables, that is, $\P\big( Y_k = 1) = p_k = 1 - \P\big( Y_k = -1)\in(0,1)$ for each $k\in\N$. Define $X_k = (Y_k +1 - 2p_k)/(2\sqrt{p_k - p_k^2} )$ for each $k\in\N$. Assume $F_n$ and $F$ have the form \eqref{Fnp} and \eqref{Fpinf} respectively such that $F_n$ converges almost surely to $F$. Does the almost surely convergence in Theorem \ref{HP-bdd} hold true?

\medskip

\noindent{\bf Open question 2.}    Fix an integer $q\geq 2$ and let $(F_n, n\in\N)$ be a sequence of random variables that belong to the sum of the first $q$ Poisson Wiener chaoses\footnote{Say, these Poisson Wiener chaoses are defined based  a Poisson random measure over a $\sigma$-finite measure space; see the book \cite{LP-book} for more details.} such that $F_n$ converges almost sure to some $F$ in the sum of the first  $q$ Poisson Wiener chaoses.
\begin{center}
{\it  Does  $J_p(F_n)$ converge almost surely to $J_p(F)$, as $n\to+\infty$?}
 \end{center} 
 Here   $J_p(\bullet)$ denotes the projection operator onto the $p$th  Poisson Wiener chaos, and one can refer to the book \cite{LP-book} for any unexplained term. 
  
\medskip

The rest of this article is organized as follows: Section \ref{sec-Gauss} is devoted to a detailed proof of Theorem \ref{thm-Gauss} and Section \ref{sec-HP} consists of two subsections that deal with Theorem \ref{HP-bdd} and Theorem \ref{HP-ubdd} respectively.

\section{Almost sure convergence on the Gaussian Wiener chaoses}\label{sec-Gauss}

   Let $W:= \big\{ W(h) : h\in\H \big\}$ be an isonormal Gaussian process over a real separable Hilbert space $\H$, \emph{i.e.} $W$ is a centered Gaussian family such that $\E[ W(h) W(g) ] = \langle h, g \rangle_\H$ for any $h, g\in\H$. The resulting $L^2(\Omega, \sigma\{W\}, \P)$ can be decomposed in an {\it orthogonal} manner as a direct sum of Gaussian Wiener chaoses:
   $
   L^2(\Omega, \sigma\{W\}, \P) = \bigoplus_{p=0}^\infty \C^W_p 
   $\,
 where $\C^W_0 = \R$ and for $p\geq 1$, the $p$th Gaussian Wiener chaos admits a complete orthonormal system given by
 \begin{align}\label{OBCP}
\left\{  \prod_{i\in\N} \frac{H_{a_i}(W(e_i) )}{\sqrt{a_i!}} \,: \, a_i\in\N\cup\{0\} \,\, \text{such that} \,\, \sum_{i\in\N}a_i = p   \right\} \,.
 \end{align}
 In this note, $H_p(x) := (-1)^p \exp(x^2/2) \frac{d^p}{dx^p}\exp(-x^2/2) $     is the $p$th Hermite polynomial\footnote{The first few Hermite polynomials are given by $H_0(x) =1$, $H_1(x) = x$, $H_2(x) = x^2 - 1$ and $H_{p+1}(x) = xH_p(x) - pH_{p-1}(x)$ for any $p\in\N$. } and $\{ e_i : i\in\N\}$ stands for an orthonormal basis of $\H$. 
 
 Recall that  $J_p(\bullet)$ denotes the projection operator onto $\C^W_p$, and we can define the Ornstein-Uhlenbeck semigroup $(P_t, t\geq 0)$ by setting
 $
 P_t  F =  \E[ F] + \sum_{p=1}^\infty e^{-pt} J_p(F)$,
 $\forall F\in L^2(\Omega, \sigma\{W\}, \P)$.
 
 It also has the following nice representation that is of central importance to our approach: given $F\in L^2(\Omega, \sigma\{W\}, \P)$, we first have $F = \mathfrak{f}(W)$ for some deterministic representation $\mathfrak{f}: \R^\H\to \R$, then   Mehler's formula reads as follows:
    \begin{align}\label{Mehler}
    P_tF= \E\big[ \mathfrak{f}(W^t) \vert \sigma\{ W\} \big]
    \end{align}
 where $W^t$ is another isonormal Gaussian process defined in \eqref{Wt}.   As a consequence of the hypercontractivity  property of the Ornstein-Uhlenbeck semigroup, we have the equivalence of all $L^r(\P)$-norm $(1< r < +\infty)$ on a fixed Gaussian Wiener chaos.    For any unexplained term, one can refer to  \cite{bluebook, Nualart06}.   
 
 \medskip
 
 Now we are ready to present the proof of Theorem \ref{thm-Gauss}.

\begin{proof}[Proof of Theorem \ref{thm-Gauss}]    As a consequence of the \emph{hypercontractivity} (see \emph{e.g.} \cite[Section 1.4.3]{Nualart06}),  $\{F_n, n\geq 1\}$ is bounded in $L^m(\P)$ for any $m>1$. Therefore,  $\E[ \vert F\vert^m] < +\infty$ for any $m >1$ and 
\[
\E\big[ (F_n - F)^2 \big] = \E[ F_n^2] +  \E[ F^2] - 2 \E[ F_n F] \xrightarrow{n\to+\infty} \E[ F^2] +  \E[ F^2] - 2 \E[ F^2] =0 \, ,
\]
where the limit follows from the almost sure convergence of $F_nF$ and its uniform integrability.  So we can conclude that $F$ belongs to the sum of the first $q$ Gaussian Wiener chaoses.

\noindent{\bf Observation:} Given $G = \mathfrak{g}(W)\in L^2(\P)$ for some deterministic representation $\mathfrak{g}:\R^\H\to \R$, then in view of \eqref{OBCP},   $G$ belongs to the sum of the first $q$ Gaussian Wiener chaoses (associated to $W$) if and only if $G$ is a polynomial in i.i.d. Gaussians $\{ W(e_i)\,:\, i\in\N \}$ with degree $\leq q$. It follows from this equivalence that conditioning on $W$, $\mathfrak{g}(W^t)$ belongs to the sum of the first $q$ Gaussian Wiener chaoses (associated to $\wh{W}$), which can be seen from the simple formula $H_n(ax+by) = \sum_{k=0}^n {n\choose k} a^kb^{n-k}H_k(x) H_{n-k}(y)$ with $a,b\in\R$ such that $a^2 + b^2 =1$.

\medskip

With the above observation and assuming $F_n = \mathfrak{f}_n(W)$, $F = \mathfrak{f}(W)$ for some deterministic $\mathfrak{f}_n$ and $\mathfrak{f}$,  it holds that $\mathfrak{f}_n(W^t)$ converges almost surely to $\mathfrak{f}(W^t)$. Therefore, conditioning on $W$,  $\mathfrak{f}_n(W^t)$ belongs to  the sum of the first $q$ Gaussian Wiener chaoses (associated to $\wh{W}$) for each $n\in\N$ and converges almost surely to $\mathfrak{f}(W^t)$. Thus, due to the hypercontractivity, we have conditioning on $W$, the expectation of $\mathfrak{f}_n(W^t)$ with respect to $\wh{W}$ converges to that of $\mathfrak{f}(W^t)$ with respect to $\wh{W}$. That is, almost surely, 
$
P_tF_n = \E\big[ \mathfrak{f}_n(W^t)\, \vert \sigma\{W\} \big] \to  \E\big[ \mathfrak{f}(W^t)\, \vert \sigma\{W\} \big]  = P_tF
$, as $n\to+\infty$. Note that the two equalities in the above display follow from Mehler's formula \eqref{Mehler}. Hence by definition, we have 
\[
\E[F_n] + \sum_{k=1}^q e^{-kt} J_k(F_n) \longrightarrow \E[F] + \sum_{k=1}^q e^{-kt} J_k(F) \,
\]
almost surely, as $n\to+\infty$.  Applying the above argument for sufficiently many  $t > 0$ gives the desired result. \qedhere  

\end{proof}

As an interesting corollary, we show that, for sequences of random variables lying in a fixed finite sum of Gaussian Wiener chaoses, the almost sure convergence is robust to the application of the standard Malliavin calculus operators $L$ and $\Gamma$, where the {\it Ornstein-Uhlenbeck} generator $L$ is defined formally by $L = \sum_{p\geq 1} -p J_p$ and the \emph{carr\'e-du-champ} operator is defined by  $\Gamma[F,G] = \frac{1}{2}\big( L[FG] - FL[G] - GL[F] \big)$ whenever the expressions make sense. For example, when $F, G$ live in a fixed finite sum of Gaussian Wiener chaoses, $L[F]$ and $\Gamma[F,G]$ are well defined. 

More precisely, we have the following result.

\begin{cor}\label{corr0}
For  fixed integers $p, q\in\N$, we consider a sequence $F_n \in \bigoplus_{k\le p} \mathbb{C}_k^{W}$ which converges almost surely towards $F$, as $n\to+\infty$ Then, we have 
$
L[F_n] \xrightarrow[n\to\infty]{\text{a.s.}}  L[F] \,.$

Consider another sequence $G_m \in \bigoplus_{k\le q} \mathbb{C}_k^{W}$ that  converges almost surely towards $G$, as $m\to+\infty$. Then, we have 
$
\Gamma[F_n, G_n] \xrightarrow[n\to\infty]{\text{a.s.}}  \Gamma[F, G]$.

\end{cor}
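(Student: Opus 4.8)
The plan is to reduce both assertions to Theorem \ref{thm-Gauss} by exploiting the fact that $L$ and the products occurring in $\Gamma$ never leave a \emph{fixed} finite sum of Wiener chaoses, so that a.s.\ convergence of the chaotic components (already granted by Theorem \ref{thm-Gauss}) propagates through finite linear combinations and finite products.

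For the statement about $L$, the point is that the formal series $L = \sum_{k\ge 1}-kJ_k$ collapses to a finite sum on $\bigoplus_{k\le p}\mathbb{C}_k^W$. Theorem \ref{thm-Gauss} first guarantees that $F\in\bigoplus_{k\le p}\mathbb{C}_k^W$ and that $J_k(F_n)\xrightarrow[n\to\infty]{\text{a.s.}}J_k(F)$ for every $k\in\{0,\dots,p\}$. Hence I would simply write $L[F_n]=-\sum_{k=1}^p k\,J_k(F_n)$ and $L[F]=-\sum_{k=1}^p k\,J_k(F)$, and observe that a finite linear combination, with fixed coefficients $-k$, of a.s.\ convergent sequences converges a.s.\ to the corresponding combination of the limits, giving $L[F_n]\xrightarrow[n\to\infty]{\text{a.s.}}L[F]$.

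For $\Gamma$, I would expand $\Gamma[F_n,G_n]=\tfrac12\big(L[F_nG_n]-F_nL[G_n]-G_nL[F_n]\big)$ and handle the three terms separately. The two products are easy: by the $L$-part just established, $L[G_n]\to L[G]$ and $L[F_n]\to L[F]$ a.s., while $F_n\to F$ and $G_n\to G$ a.s.\ by hypothesis, so $F_nL[G_n]\to FL[G]$ and $G_nL[F_n]\to GL[F]$ a.s.\ (products of a.s.\ convergent sequences converge a.s.\ on the intersection of the two full-measure sets). The delicate term is $L[F_nG_n]$. Here the key structural input is the multiplication formula for Wiener chaos, underpinned by the Hermite product formula $H_mH_n=\sum_{j=0}^{\min(m,n)}\binom{m}{j}\binom{n}{j}j!\,H_{m+n-2j}$: the product of an element of $\bigoplus_{k\le p}\mathbb{C}_k^W$ and one of $\bigoplus_{k\le q}\mathbb{C}_k^W$ is a linear combination of chaoses of order at most $p+q$, hence lies in the \emph{fixed} finite sum $\bigoplus_{k\le p+q}\mathbb{C}_k^W$. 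Since the $F_n,G_n$ have moments of all orders by hypercontractivity, $F_nG_n\in L^2(\P)$, and $F_nG_n\to FG$ a.s.\ because a.s.\ convergence is preserved under products. I would then apply Theorem \ref{thm-Gauss} a \emph{second time}, now to the sequence $(F_nG_n)$ inside $\bigoplus_{k\le p+q}\mathbb{C}_k^W$, obtaining $J_k(F_nG_n)\to J_k(FG)$ a.s.\ for each $k$, and therefore $L[F_nG_n]=-\sum_{k=1}^{p+q}k\,J_k(F_nG_n)\to-\sum_{k=1}^{p+q}k\,J_k(FG)=L[FG]$ a.s. Combining the three limits yields $\Gamma[F_n,G_n]\xrightarrow[n\to\infty]{\text{a.s.}}\tfrac12\big(L[FG]-FL[G]-GL[F]\big)=\Gamma[F,G]$.

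I expect the main obstacle to be precisely the $\Gamma$-term: one must recognize that $F_nG_n$ still lives in a finite sum of chaoses (of order $\le p+q$), which is exactly what licenses the second invocation of Theorem \ref{thm-Gauss} on the product sequence. Once this closure-under-products fact is in place, the remainder is routine bookkeeping with finite sums and products of a.s.\ convergent sequences.
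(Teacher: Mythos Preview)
Your proposal is correct and follows essentially the same approach as the paper: apply Theorem \ref{thm-Gauss} to obtain a.s.\ convergence of each $J_k(F_n)$ (hence of $L[F_n]$), observe via the multiplication formula that $F_nG_n$ stays in $\bigoplus_{k\le p+q}\mathbb{C}_k^W$, and invoke the first assertion again on the product sequence to get $L[F_nG_n]\to L[FG]$ a.s. The paper's write-up is simply terser, leaving the handling of the two cross terms $F_nL[G_n]$ and $G_nL[F_n]$ implicit, whereas you spell them out.
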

\begin{proof}
First of all, Theorem \ref{thm-Gauss} tells us that $J_k(F_n)$ converges towards $J_k(F)$ almost surely for every $k\in\{ 0, \ldots, p\}$. Thus,   we get
\[
L[F_n]= \sum_{k=0}^p -k J_k(F_n) \xrightarrow[n\to\infty]{\text{a.s.}}  L [F].
\]This achieves the proof of the first assertion and in the same way, $ L[G_n] \xrightarrow[n\to\infty]{\text{a.s.}}  L[G] \,.$

Second assertion follows easily from the first one: Clearly $F_nG_n$ lives in the sum of the first $(p+q)$ Gaussian Wiener chaos in view of the multiplication formula (see \emph{e.g.} \cite{Nualart06}) and  $F_nG_n$ converges to $FG$ almost surely, thus $L[F_nG_n]$ converges almost surely to $L[FG]$, implying $
\Gamma[F_n, G_n] \xrightarrow[n\to\infty]{\text{a.s.}}  \Gamma[F, G]$. This completes our proof. \qedhere

\end{proof}

\begin{obs} {\rm As we have pointed out, Theorem \ref{HP-bdd} covers the case where $F_n$ belongs to a fixed sum of  Rademacher chaoses (in the symmetric setting), so a similar result to the above Corollary \ref{corr0} can be formulated.

}

\end{obs}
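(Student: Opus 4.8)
The plan is to state and prove the exact Rademacher analogue of Corollary \ref{corr0}, substituting Theorem \ref{HP-bdd} for Theorem \ref{thm-Gauss} at the one place where the chaotic decomposition of an almost-sure limit is invoked. Concretely, I would fix a sequence $\mathbb{X} = \{X_i, i\in\N\}$ of symmetric Rademacher variables, write $\mathbb{C}_k^{\mathbb{X}}$ for the $k$th Rademacher chaos (the span of the $Q_k(f;\mathbb{X})$ with $f\in\ell^2_0(\N)^{\odot k}$, with $\mathbb{C}_0^{\mathbb{X}} = \R$), let $J_k$ be the projection onto $\mathbb{C}_k^{\mathbb{X}}$, and define $L = \sum_{p\ge 1}(-p)J_p$ together with $\Gamma[F,G] = \tfrac12\big(L[FG] - FL[G] - GL[F]\big)$ verbatim as in the Gaussian case. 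The statement to establish is then: if $F_n\in\bigoplus_{k\le p}\mathbb{C}_k^{\mathbb{X}}$ converges almost surely to $F$ and $G_n\in\bigoplus_{k\le q}\mathbb{C}_k^{\mathbb{X}}$ converges almost surely to $G$, then $L[F_n]\to L[F]$ and $\Gamma[F_n,G_n]\to\Gamma[F,G]$ almost surely.

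First I would verify that a symmetric Rademacher sequence meets the hypotheses of Theorem \ref{HP-bdd}: since $|X_i| = 1$ almost surely, we have $\E[X_i^2] = 1$ for every $i$ and $\sup_j\E[|X_j|^{2+\delta}] = 1 < \infty$ for any $\delta > 0$. Theorem \ref{HP-bdd} therefore applies to $F_n$ (and, separately, to $G_n$) and yields $J_k(F_n)\to J_k(F)$ almost surely for each $k\in\{0,\ldots,p\}$. The first assertion is then immediate, because $L[F_n] = \sum_{k=0}^p (-k)J_k(F_n)$ is a fixed finite linear combination of the convergent projections, so $L[F_n]\to L[F]$ almost surely; the same argument gives $L[G_n]\to L[G]$.

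For the carr\'e-du-champ assertion I would mimic the Gaussian corollary, the key structural input now being the Rademacher multiplication formula. Since $X_i^2 = 1$, the product of two Walsh monomials satisfies $\prod_{i\in S}X_i\cdot\prod_{j\in T}X_j = \prod_{i\in S\triangle T}X_i$, a single Walsh monomial of degree $|S\triangle T|$ lying between $|p-q|$ and $p+q$; hence $F_nG_n$ belongs to the fixed space $\bigoplus_{k\le p+q}\mathbb{C}_k^{\mathbb{X}}$, its homogeneous components carrying symmetric, diagonal-vanishing kernels (and all the products in sight being square-integrable by the Rademacher hypercontractivity of Lemma \ref{hyper}). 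Because $F_nG_n\to FG$ almost surely and both lie in this fixed finite sum of chaoses, the first assertion applies to the products and gives $L[F_nG_n]\to L[FG]$ almost surely. Feeding this, together with $F_n\to F$, $G_n\to G$, $L[F_n]\to L[F]$ and $L[G_n]\to L[G]$, into the identity $\Gamma[F_n,G_n] = \tfrac12\big(L[F_nG_n] - F_nL[G_n] - G_nL[F_n]\big)$ delivers $\Gamma[F_n,G_n]\to\Gamma[F,G]$ almost surely.

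The main obstacle, and essentially the only ingredient beyond a transcription of the Gaussian proof, is the claim that products remain in a \emph{fixed} finite sum of Rademacher chaoses; this is exactly where the identity $X_i^2 = 1$ and the symmetry of the $X_i$ (through the very applicability of Theorem \ref{HP-bdd}) are used. I would record, as a caveat, that the argument does not transfer to the normalized nonsymmetric Rademacher variables of Open question 1: there $X_i^2$ is not constant, so the clean multiplication formula breaks down, which is precisely the source of the difficulty flagged in that open problem.
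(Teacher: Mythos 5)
Your proposal is correct and follows exactly the route the paper intends for this remark: apply Theorem \ref{HP-bdd} (whose hypotheses symmetric Rademacher variables trivially satisfy, with the limit automatically in the first $q$ chaoses by Lemma \ref{hyper}) in place of Theorem \ref{thm-Gauss}, and then transcribe the proof of Corollary \ref{corr0}, with the identity $X_i^2=1$ supplying the multiplication-formula step that keeps $F_nG_n$ in a fixed finite sum of chaoses. Your closing caveat about the nonsymmetric case correctly identifies why Open question 1 does not follow from the same argument.
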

      
  \section{Almost sure convergence on homogeneous sums}\label{sec-HP}
  
  This section is divided into two parts, presenting the proof of Theorems \ref{HP-bdd},  \ref{HP-ubdd} respectively.  Having our general strategy in mind, we will first present our use of auxiliary randomness   both in Section \ref{bddsec} and in Section \ref{ubddsec}.

  \subsection{The  case of bounded $(2+\delta)$ moments}\label{bddsec}   Recall that $\mathbb{X} = ( X_i, i\in\N)$ is a sequence of centered independent random variables. Let $\wh{\mathbb{X}} = \big( \wh{X}_i, i\in\N \big)$ is an independent copy of $\mathbb{X}$ and let $\Theta = \big( \theta_i, i\in\N \big)$ be a sequence of i.i.d. standard exponential random variables such that $\mathbb{X},\wh{\mathbb{X}}$ and $\Theta$ are independent. Given any $t > 0$, we define a new sequence $\mathbb{X}^t= ( X^t_i, i\in\N)$ by setting
  $
  X^t_i = X_i\cdot\1_{\{\theta_i \geq t \}} + \wh{X}_i\cdot \1_{\{\theta_i < t \}}$  for each $i\in\N$.     It is routine to verify that $\mathbb{X}^t$ has the same law as   $\mathbb{X}$: Indeed,  given any $y\in\R, i\in\N$, we have 
$
\P\big( X_i^t \leq y \big) = \P\big( X_i \leq y ,  \theta_i \geq t \big) +  \P\big( \wh{X}_i \leq y ,  \theta_i < t \big) = \P\big( X_i \leq y \big) 
$
  and moreover  it is clear that $\mathbb{X}^t$ is a sequence of independent random variables, thus $\mathbb{X}^t$ has the same law  as   $\mathbb{X}$.

   Another ingredient for our proof is the following {\it hypercontractivity property}.

   \begin{lemma}\label{hyper} Let $\Xi=(\xi_i, i\in\N)$ be a sequence of real centered independent random variables such that $\E[ \xi_i^2] =1$ for each $i\in\N$ and  $M:=\sup\big\{\E[  \vert \xi_j\vert^{2+\delta}  ] \,:\, j\in\N   \big\}$ is finite for some $\delta > 0$.   Given any $f\in\ell^2_0(\N)^{\odot d}$, one has
   \begin{align}\label{onehas}
   \big\| Q_d(f ; \Xi ) \big\| _{L^{2+\delta}(\P)} \leq  \left(  2\sqrt{1+\delta} \cdot M^{1/(2+\delta)}   \right)^d  \big\| Q_d(f ; \Xi ) \big\| _{L^2(\P)} \,.
   \end{align}
As a consequence: Given a fixed integer   $q\geq 2$, 
$
F_n := \E[F_n] + \sum_{p=1}^q Q_p\big(f_{p,n};\Xi \big)$,
with $f_{p,n}\in\ell^2_0(\N)^{\odot p}$ for each $p\in\{1,\ldots, q\}$, if $\{F_n\}_{n\geq 1}$ is tight, 
then $\sup\big\{ \E[  \vert F_n\vert^{2+\delta} ] : n\in\N \big\}< +\infty$. 
   
      \end{lemma}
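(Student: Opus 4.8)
The plan is to derive the hypercontractive inequality \eqref{onehas} from a one-dimensional estimate combined with a tensorization argument, and then to obtain the stated consequence by feeding \eqref{onehas} into a tightness argument. Throughout write $q := 2+\delta$ and $\eta := \big(2\sqrt{q-1}\,M^{1/q}\big)^{-1}$, so that the claimed constant is exactly $\eta^{-d}$. The first and central ingredient is a \emph{single-variable hypercontractivity}: every $\xi_i$ should satisfy, for all $a\in\R$,
$$\big\| a + \eta\,\xi_i \big\|_{L^q(\P)} \;\le\; \big\| a + \xi_i\big\|_{L^2(\P)} = \sqrt{a^2+1}.$$
By homogeneity this is equivalent to $\E\big[\, |\alpha + \eta\beta\,\xi_i|^q\,\big] \le (\alpha^2+\beta^2)^{q/2}$ for all real $\alpha,\beta$, which is the form I would feed into the induction.

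Granting the single-variable estimate, I would prove \eqref{onehas} by induction on the number of coordinates supporting $f$, assuming first that $f$ has finite support. Since $f$ vanishes on diagonals, $Q_d(f;\Xi)$ is \emph{multilinear}: isolating the last active coordinate $\xi_N$, write $Q_d(\eta\xi_1,\dots,\eta\xi_N) = \tilde A + \eta\xi_N\tilde B$, where $\tilde A,\tilde B$ are the images under $\xi_i\mapsto\eta\xi_i$ of multilinear polynomials $A,B$ in $\xi_1,\dots,\xi_{N-1}$. Conditioning on $\xi_1,\dots,\xi_{N-1}$ and applying the single-variable estimate to $\xi_N$ gives $\E_{\xi_N}|\tilde A+\eta\xi_N\tilde B|^q \le (\tilde A^2+\tilde B^2)^{q/2}$ pointwise; taking expectations and using Minkowski's inequality in $L^{q/2}(\P)$ (legitimate since $q/2\ge 1$) bounds $\|\tilde A+\eta\xi_N\tilde B\|_q^2$ by $\|\tilde A\|_q^2 + \|\tilde B\|_q^2$. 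The induction hypothesis applied to $A$ and $B$ turns this into $\|A\|_2^2 + \|B\|_2^2$, which equals $\|A+\xi_N B\|_2^2 = \|Q_d(f;\Xi)\|_2^2$ because $\E\xi_N=0$, $\E\xi_N^2=1$ and $\xi_N$ is independent of $(A,B)$. As $Q_d$ is homogeneous of degree $d$ we have $Q_d(\eta\Xi)=\eta^d Q_d(\Xi)$, and \eqref{onehas} follows; the general $f\in\ell^2_0(\N)^{\odot d}$ is recovered by truncating $f$ to $\{1,\dots,N\}^d$, applying the finite case, and letting $N\to\infty$ using $L^2(\P)$-convergence of the truncations together with Fatou's lemma.

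The main obstacle is the single-variable hypercontractivity, since the two reductions above are essentially bookkeeping around it. To prove it one reduces (by symmetry and homogeneity) to showing $\E|a+\eta\xi_i|^q \le (a^2+1)^{q/2}$ for $a\ge 0$. For $a$ bounded away from $0$ one expands $|a+\eta\xi_i|^q = a^q|1+\eta\xi_i/a|^q$ and applies a second-order Taylor estimate together with $\E\xi_i=0$, $\E\xi_i^2=1$; this is where the Gaussian exponent $\sqrt{q-1}$ enters, exactly as in the classical Gaussian computation. The delicate point is the calibration of the overall constant: the factor $2M^{1/q}$ is what absorbs the non-Gaussian moment $\E|\xi_i|^q\le M$ and controls the complementary range of $a$, and making this precise requires the careful moment estimates familiar from the hypercontractivity theory of homogeneous sums, which I would either reproduce or simply quote.

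For the stated consequence, I would first record that on the fixed finite chaos the $L^{2+\delta}$ and $L^2$ norms are comparable uniformly in $n$. Using the triangle inequality, \eqref{onehas} for each component, and the mutual orthogonality of the constant $\E[F_n]$ and the summands $Q_p(f_{p,n};\Xi)$ (so that $\|F_n\|_2^2 = \E[F_n]^2 + \sum_{p=1}^q \|Q_p(f_{p,n};\Xi)\|_2^2$), the Cauchy--Schwarz inequality yields a constant $C=C(q,\delta,M)$, independent of $n$, with $\|F_n\|_{2+\delta}\le C\|F_n\|_2$. It then suffices to show that tightness forces $\sup_n\|F_n\|_2<\infty$. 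Suppose not; passing to a subsequence with $\|F_{n_k}\|_2\to\infty$ and setting $G_k := F_{n_k}/\|F_{n_k}\|_2$, we have $\|G_k\|_2=1$, while the norm comparison gives $\sup_k\|G_k\|_{2+\delta}\le C$, so $\{G_k^2\}$ is uniformly integrable. Tightness of $\{F_n\}$ forces $G_k\to 0$ in probability (the thresholds $R/\|F_{n_k}\|_2$ tend to $0$), and uniform integrability upgrades this to $\E[G_k^2]\to 0$, contradicting $\E[G_k^2]=1$. Hence $\sup_n\|F_n\|_2<\infty$, and the norm comparison gives $\sup_n\E[|F_n|^{2+\delta}]=\sup_n\|F_n\|_{2+\delta}^{2+\delta}<\infty$, as desired.
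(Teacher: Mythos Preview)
Your proposal is correct. For the inequality \eqref{onehas}, the paper simply invokes Propositions 3.11, 3.12 and 3.16 of Mossel--O'Donnell--Oleszkiewicz (after the same truncate-and-pass-to-the-limit step you use), whereas you sketch the content of those propositions directly: the single-variable $(2,2+\delta,\eta)$-hypercontractivity followed by the Bonami-type tensorization. One small point: as written, your induction is on the number of coordinates for \emph{degree-$d$} kernels, but in the induction step $B$ has degree $d-1$; the clean fix is to induct, over $N$, on the statement $\|P(\eta\xi_1,\dots,\eta\xi_N)\|_{2+\delta}\le\|P(\xi_1,\dots,\xi_N)\|_2$ for \emph{all} multilinear $P$, and only specialize to homogeneous $Q_d$ (where $P(\eta\Xi)=\eta^d P(\Xi)$) at the very end. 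This is clearly what you intend and does not affect correctness.

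For the consequence, both arguments first obtain $\|F_n\|_{2+\delta}\le\kappa\|F_n\|_2$ with $\kappa$ independent of $n$, and then convert tightness into $\sup_n\|F_n\|_2<\infty$. The paper does this via a direct Paley--Zygmund estimate: from H\"older one gets $\P\big(F_n^2>\tfrac12\E[F_n^2]\big)\ge(2\kappa^2)^{-(2+\delta)/\delta}$ uniformly in $n$, and tightness then forces $\tfrac12\E[F_n^2]\le K$ for the $K$ coming from the tightness bound. Your route---normalize along a hypothetical subsequence with $\|F_{n_k}\|_2\to\infty$, use the norm comparison to get uniform integrability of $G_k^2$, and combine with $G_k\to0$ in probability from tightness---is a genuine alternative. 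The paper's argument is a touch more direct and yields an explicit bound on $\sup_n\E[F_n^2]$ in terms of the tightness parameter; yours is perhaps more conceptually transparent (uniform integrability upgrades convergence in probability) but proceeds by contradiction and gives no explicit constant.
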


 \begin{proof} Our lemma follows easily from Propositions 3.11, 3.12 and 3.16 in \cite{MOO10}:  let us first truncate the kernel $f_n  = f \cdot\1_{\{1, \ldots, n\}^d}$ with any $n\geq d$, then $Q_d(f ; \Xi )$ is simply a multilinear polynomial in $\xi_1, \ldots, \xi_n$ so that the results in   \cite{MOO10} imply 
 $
  \big\| Q_d(f_n ; \Xi ) \big\| _{L^{2+\delta}(\P)} \leq \left(  2\sqrt{1+\delta} \cdot M^{1/(2+\delta)}   \right)^d   \big\| Q_d(f_n ; \Xi ) \big\| _{L^2(\P)} $,
 thus by passing $n$ to infinity, we get \eqref{onehas}; and by Minkowski's inequality, we get $\| F_n \| _{L^{2+\delta}(\P)} \leq \kappa\cdot  \| F_n  \| _{L^2(\P)}$ with $\kappa:= \left(  2\sqrt{1+\delta} \cdot M^{1/(2+\delta)}   \right)^q (q+1)$.  Note that
 \begin{align*}
 \E\big[  F_n^2 ] \leq \E\big[  F_n^2 \cdot\1_{\{  F_n^2 > \E[ F_n^2]/2  \}} \big] + \frac{1}{2} \E[ F_n^2] \leq  \frac{1}{2} \E[ F_n^2]+ \| F_n \| _{L^{2+\delta}(\P)}^{2} \cdot \P\big( F_n^2 > \E[ F_n^2]/2 \big)^{\delta/(2+\delta)}
 \end{align*}
 where the last inequality follows from the H\"older inequality.  Therefore, 
 \[
\P\big( F_n^2 > \E[ F_n^2]/2 \big) \geq \left(1/2\right)^{(2+\delta)/\delta} \left( \| F_n \| _{L^2(\P)}  \big/ \| F_n \| _{L^{2+\delta}(\P)}  \right)^{2(2+\delta)/\delta}  \geq  (2\kappa^2)^{-(2+\delta)/\delta} \,\,~\text{for each $n\in\N$}
 \]
 while due to tightness, one can find $K > 0$ large enough such that $\P\big( F_n^2 > K \big) <  (2\kappa^2)^{-(2+\delta)/\delta}$ for each $n\in\N$, implying that $ \P\big(  F_n^2 > \E[ F_n^2]/2 \big)  > \P\big( F_n^2 > K \big)$. This gives us $\E[ F_n^2] \leq 2K $, $\forall n\in\N$. Hence, $\| F_n \| _{L^{2+\delta}(\P)} \leq \kappa \cdot \sqrt{2K} < +\infty$ for each $n\in\N$.                    \end{proof}

\begin{proof}[Proof of Theorem \ref{HP-bdd}]    Without losing any generality, we can assume that $F_n = \mathfrak{f}_n(\mathbb{X})$ and $F = \mathfrak{f}(\mathbb{X})$ for some deterministic mappings $\mathfrak{f}_n, \mathfrak{f}$ from $\R^\N$ to $\R$.  By our construction of $\mathbb{X}^t$, it follows that
$
 \mathfrak{f}_n(\mathbb{X}^t) =  \E[ F_n] + \sum_{p=1}^q Q_p\big(f_{p,n}; \mathbb{X}^t\big) \longrightarrow \mathfrak{f}(\mathbb{X}^t) =  \E[ F] + \sum_{p=1}^q Q_p\big(f_{p}; \mathbb{X}^t\big)$ almost surely.  Now fixing a generic realization of $\mathbb{X}$, $\mathfrak{f}_n(\mathbb{X}^t)$ and $\mathfrak{f}(\mathbb{X}^t)$ are polynomials (with bounded degrees) in 
 \[
\Xi:= \left\{  \wh{X}_1,  \frac{\1_{\{ \theta_1 < t \}} -(1- e^{-t})}{  \sqrt{ e^{-t}(1-e^{-t})  }  } ,  \wh{X}_2,  \frac{\1_{\{ \theta_2 < t \}} -(1- e^{-t})}{  \sqrt{ e^{-t}(1-e^{-t})  } } ,  \wh{X}_3,  \frac{\1_{\{ \theta_3 < t \}} -(1- e^{-t})}{ \sqrt{ e^{-t}(1-e^{-t})  }  } \,, \ldots \, \right\}
 \]
 and $\mathfrak{f}_n(\mathbb{X}^t)$ converges to $\mathfrak{f}(\mathbb{X}^t)$ almost surely  with respect to the randomness $\Theta, \wh{\mathbb{X}}$, as $n\to+\infty$.
 
Note that $\Xi$ defined above satisfies the assumptions in Lemma \ref{hyper}.  Thus, by fixing a generic realization of $\mathbb{X}$, $\mathfrak{f}_n(\mathbb{X}^t)$ is uniformly bounded in $L^{2+\delta}(\P)$ (with respect to the randomness $\Theta, \wh{\mathbb{X}}$).  Therefore,   we have 
$
 \E\big[ \mathfrak{f}_n( \mathbb{X}^t) \vert \mathbb{X} \big]   \longrightarrow  \E\big[ \mathfrak{f}( \mathbb{X}^t) \vert \mathbb{X} \big]  
$,
as $n\to+\infty$.  The above conditional expectations can be easily computed as follows: 
\begin{align*}
 \E\big[ \mathfrak{f}_n( \mathbb{X}^t) \vert \mathbb{X} \big]& = \E[F_n] + \sum_{p=1}^q  \E\big[      Q_p\big(f_{p,n}; \mathbb{X}^t\big)          \vert \mathbb{X} \big]  = \E[F_n] + \sum_{p=1}^q  \sum_{i_1, \ldots, i_p\in\N}f_{p,n}(i_1, \ldots, i_p)   \E\big[  X^t_{i_1}\cdots X^t_{i_p}     \vert \mathbb{X} \big] \\
&= \E[F_n] + \sum_{p=1}^q  \sum_{i_1, \ldots, i_p\in\N}f_{p,n}(i_1, \ldots, i_p)  e^{-pt} X_{i_1}\cdots X_{i_p} =  \E[ F_n] + \sum_{p=1}^q e^{-pt} Q_p\big(f_{p,n}; \mathbb{X}\big) \,;
 \end{align*}
 in the same way, we have $ \E\big[ \mathfrak{f}( \mathbb{X}^t) \vert \mathbb{X} \big] =   \E[ F] + \sum_{p=1}^q e^{-pt} Q_p\big(f_{p}; \mathbb{X}\big)$.   Hence the desired result follows from the same ending argument as in previous section.   \qedhere
 
\end{proof}

\subsection{The unbounded case}\label{ubddsec}   Recall from the statement of Theorem \ref{HP-ubdd} that  $\mathbb{X}$ is a sequence of centered independent random variables and the kernels $(f_{p,n}, n\geq 1)$ have their supports uniformly contained in $\{1, \ldots, d\}^p$, for each $p\in\{1, \ldots, q\}$. It follows that $F_n$ and $F$ only depend on the first $d$ coordinates $X_1, \ldots, X_d$.

Now given any $m \geq  d$, in what follows, we   construct random variables $X^{(I)}_1, \ldots, X^{(I)}_m$ such that $\big(X^{(I)}_1, \ldots, X^{(I)}_m\big)$ is equal in law to $(X_1, \ldots, X_m)$: Let $\wh{\mathbb{X}}$ be an independent copy of $\mathbb{X}$ and let $I:=I_m$ be a uniform random variable on $[m]:=\{1, \ldots, m\}$ such that $\mathbb{X}$, $\wh{\mathbb{X}}$ and $I$ are independent, then for each $i, k\in[m]$, we define 
$
X_k^{(i)} = X_k \cdot \1_{\{ i \neq k \}} + \wh{X}_k \cdot \1_{\{ i = k \}}$  and  $X_k^{(I)} = X_k \cdot \1_{\{ I \neq k \}} + \wh{X}_k \cdot \1_{\{ I = k \}}$.
                It is routine to verify that $\big(X^{(I)}_1, \ldots, X^{(I)}_m\big)$ and  $(X_1, \ldots, X_m)$   are exchangeable pairs, which  is well-known in the community of Stein's method, see for instance \cite{Stein86}.

 \begin{proof}[Proof of Theorem \ref{HP-ubdd}]  Since $F_n, F$ only depend on the first $d$ coordinates, we can write 
 $$Q_p\big(f_{p}; \{X_k, k\in[m] \} \big) = Q_p\big(f_{p};  \mathbb{X}\big) \quad\text{for any $m\geq d$.}
 $$
With this convention, we   define $F^{(i)}$ to be the following sum
 $
\E[F]+  \sum_{p=1}^q Q_p\big(f_{p}; \{X^{(i)}_k, k\in[m] \} \big) 
 $
 and similarly we define $F_n^{(I)}$, $F^{(I)}$ and $F_n^{(i)}$ for each $n\geq 1$. It follows   that $F_n^{(j)}$  converges almost surely to $F^{(j)}$ for each $j$, since $(X^{(j)}_k, k\in[m] )$ is clearly equal in law to $(X_k, k\in[m] )$.
 
 Now we fix $i\in[m]$, $F_n^{(i)}$ can be expressed as a sum of two parts: $F_n^{(i)} = \alpha_n(i) + \beta_n(i) \wh{X}_i$, where $\alpha_n(i)$ and  $\beta_n(i)$ are polynomials in $X_1, \ldots, X_{i-1}, X_{i+1}, \ldots, X_m$.  In the same way, we can rewrite  $F^{(i)} = \alpha(i) + \beta(i) \wh{X}_i$, where $\alpha(i)$ and  $\beta(i)$ are polynomials in $X_1, \ldots, X_{i-1}, X_{i+1}, \ldots, X_m$. 
(Note that $\beta_n(i) = \beta(i) =0$ for $i > d$.)  Therefore, conditioning on $\{X_1, \ldots, X_m\}$, $ \alpha_n(i)\to  \alpha(i)  $ and $\beta_n(i)\to \beta(i)$, as $n\to+\infty$. Thus,
  $
  \E\big[ F_n^{(i)} \vert \mathbb{X} \big] =  \alpha_n(i)\to  \alpha(i) =  \E\big[ F^{(i)} \vert \mathbb{X} \big] $.
 Hence,  almost surely
   \begin{align}\label{U1}
  \E\big[ F_n^{(I)} \vert \mathbb{X} \big] =  \frac{1}{m} \sum_{i=1}^m     \E\big[ F_n^{(i)} \vert \mathbb{X} \big]    \to       \frac{1}{m} \sum_{i=1}^m     \E\big[ F^{(i)} \vert \mathbb{X} \big]  =      \E\big[ F^{(I)} \vert \mathbb{X} \big] \,.
  \end{align}
 Let us now compute the above conditional expectations:  since
 \begin{align*}
&\quad  \E\Big[ Q_p\big(f_{p}; \{ X^{(I)}_k, k\in[m] \}  \big) \big\vert  \mathbb{X} \Big]  - Q_p\big(f_{p}; \mathbb{X} \big)  = \sum_{i=1}^m m^{-1}  \E\Big[ Q_p\big(f_{p}; \{ X^{(i)}_k, k\in[m] \}  \big)  - Q_p\big(f_{p}; \mathbb{X} \big)  \big\vert  \mathbb{X} \Big]   \\
%& = \sum_{i=1}^m m^{-1}  \sum_{j_1, \ldots, j_p\leq d}  f_p(j_1, \ldots, j_p) \E\big[ X^{(i)}_{j_1}\cdots  X^{(i)}_{j_p} - X_{j_1}\cdots  X_{j_p} \vert \mathbb{X} \big] \cdot\1_{( i\in\{ j_1, \ldots,  j_p   \} )}  \\
& = -  \sum_{i=1}^m m^{-1}  \sum_{j_1, \ldots, j_p\leq d}  f_p(j_1, \ldots, j_p)  X_{j_1}\cdots  X_{j_p}   \cdot\1_{( i\in\{ j_1, \ldots,  j_p   \} )}  =  - p m^{-1}   Q_p\big(f_{p}; \mathbb{X} \big) \,,
 \end{align*}
we have $ \E\Big[ Q_p \big(f_{p}; \{ X^{(I)}_k, k\in[m] \}   \big)  \vert  \mathbb{X} \Big]  =  (1 - pm^{-1} ) Q_p(f_p; \mathbb{X} )$
 so that \eqref{U1} implies
 \[
\E[F_n]+ \sum_{p=1}^q (1 - p m^{-1} )  Q_p\big(f_{p,n}; \mathbb{X} \big) \xrightarrow[a.s.]{n\to\infty} \E[F]+ \sum_{p=1}^q (1 - p m^{-1} )  Q_p\big(f_{p}; \mathbb{X} \big) .
 \]
 Hence the desired result follows from    iterating the above process for many enough $m \geq d$.
 \end{proof}

\end{document}